\documentclass[11pt]{article}

\usepackage[a4paper,margin=1.15in]{geometry}
\usepackage{amsmath,amssymb,amsthm,mathtools}
\usepackage{bm}
\usepackage{booktabs}
\usepackage{enumitem}
\usepackage{hyperref}

\hypersetup{
	colorlinks=true,
	linkcolor=blue,
	citecolor=blue,
	urlcolor=blue
}

\numberwithin{equation}{section}

\newtheorem{theorem}{Theorem}[section]
\newtheorem{proposition}[theorem]{Proposition}
\newtheorem{lemma}[theorem]{Lemma}

\newtheorem{definition}[theorem]{Definition}
\newtheorem{remark}[theorem]{Remark}
\newtheorem{example}[theorem]{Example}

\newcommand{\R}{\mathbb R}
\newcommand{\Sd}{\mathbb S^d}
\newcommand{\Spd}{\mathbb S^d_+}
\newcommand{\co}{\operatorname{co}}
\newcommand{\cl}{\operatorname{cl}}
\newcommand{\tr}{\operatorname{tr}}
\newcommand{\dist}{\operatorname{dist}}

\newcommand{\Hess}{\mathcal H}
\newcommand{\LimHess}{\mathcal L}
\newcommand{\LOC}{\operatorname{LOC}}
\newcommand{\NLOC}{\operatorname{NLOC}}
\newcommand{\CONV}{\operatorname{CONV}}

\title{Local Nonconvexity Indices for \(C^{1,1}\) Functions via Generalized Hessians}

\author{Marina Palaisti}

\date{\today}

\begin{document}
	
	\maketitle
	
	\begin{abstract}
		Davydov, Moldavskaya, and Zitikis introduced local indices for quantifying the lack of convexity of a \(C^2\) function by measuring the nuclear-norm distance of its Hessian from the cone of positive semidefinite matrices. This paper develops a local analogue for functions of class \(C^{1,1}\). At a point \(x\), the classical Hessian is replaced by the Clarke-type generalized Hessian set \(\Hess(h;x)\), defined as the closed convex hull of limiting Hessians at nearby twice differentiability points. Evaluating the same spectral functional over \(\Hess(h;x)\) gives an interval-valued local nonconvexity index whose lower and upper endpoints represent, respectively, the least and greatest visible second-order nonconvexity at \(x\). The construction reduces to the original smooth index when \(h\in C^2\), vanishes for convex \(C^{1,1}\) functions, is invariant under orthogonal changes of variables, satisfies a subadditivity inequality for the upper endpoint under sums, and is upper semicontinuous in its upper endpoint. We also relate the upper endpoint to a pointwise weak-convexity curvature modulus and give explicit \(C^{1,1}\setminus C^2\) examples. The paper is deliberately local in scope: it proposes a scalar diagnostic extracted from generalized Hessian sets, not a replacement for the richer second-order variational theory of nonsmooth convexity.
	\end{abstract}
	
	\noindent\textbf{Keywords:}
	\(C^{1,1}\) functions; generalized Hessian; Clarke Hessian; local nonconvexity; weak convexity; variational convexity.
	
	\noindent\textbf{MSC 2020:}
	49J52; 49J53; 90C30; 26B25.
	
	\section{Introduction}
	
	Convexity is one of the central structural assumptions in optimization, variational analysis, and risk modelling. It provides qualitative stability, global optimality of local minimizers, and strong algorithmic guarantees. Many functions arising in applications, however, are nonconvex, and many are not twice continuously differentiable. This motivates quantitative tools that can measure not only whether convexity fails, but also how severely it fails at a given point.
	
	Davydov, Moldavskaya, and Zitikis \cite{davydov2019} proposed a local index of nonconvexity for \(C^2\) functions. If \(h:G\to\R\) is twice continuously differentiable on a domain \(G\subset\R^d\), their index evaluates the negative spectral part of the Hessian \(D^2h(x)\). Equivalently, it measures the nuclear-norm distance from \(D^2h(x)\) to the cone of positive semidefinite matrices. This interpretation is attractive because it is local, spectral, and geometrically transparent.
	
	The limitation is that the construction uses the classical Hessian. It therefore does not directly apply to functions that are differentiable with Lipschitz gradient but fail to be \(C^2\). Such functions are common in nonsmooth optimization and variational analysis. For \(C^{1,1}\) functions, however, second-order information is still available through generalized Hessian sets. By Rademacher's theorem, the gradient is differentiable almost everywhere, and one can form a compact convex set of limiting Hessian matrices at each point. This is the setting of the present note.
	
	The main idea is simple. We replace the single Hessian matrix \(D^2h(x)\) in the smooth index by the generalized Hessian set \(\Hess(h;x)\). Since \(\Hess(h;x)\) is a set rather than a single matrix, the local index becomes interval-valued. Its lower endpoint records the smallest value of the Davydov--Moldavskaya--Zitikis spectral functional visible in the generalized Hessian set, while its upper endpoint records the largest. Thus the proposed object is not merely a number; it is the interval
	\[
	\mathcal I_h(x)
	:=
	\left[
	\underline{\LOC}(h;x),
	\overline{\LOC}(h;x)
	\right].
	\]
	The interval collapses to a singleton in the \(C^2\) case.
	
	The scope of this contribution is intentionally modest. Recent nonsmooth optimization theory has developed powerful second-order frameworks involving prox-regularity, variational convexity, second-order subdifferentials, generalized twice differentiability, quadratic bundles, and generalized Newton methods; see, for example, \cite{rockafellarwets1998,poliquinrockafellar1996,lewiszhang2013,khanh2023,gfrerer2025,khanh2025,khanh2024newton,khanh2026bundles}. We do not attempt to replace these frameworks. The proposed indices are scalar summaries extracted from a Clarke-type generalized Hessian set. They are best understood as local diagnostics of second-order nonconvexity, not as complete characterizations of variational convexity or algorithmic behaviour.
	
	The contributions are as follows.
	
	\begin{enumerate}[label=(\roman*)]
		\item We define a local interval-valued nonconvexity index for \(C^{1,1}\) functions by evaluating the Davydov--Moldavskaya--Zitikis spectral functional over the generalized Hessian set.
		\item We prove basic consistency and structural properties: reduction to the \(C^2\) case, vanishing for convex \(C^{1,1}\) functions, orthogonal invariance, subadditivity of the upper endpoint under sums, and upper semicontinuity of the upper endpoint.
		\item We relate the upper endpoint to a pointwise curvature modulus that controls weak convexity after a quadratic shift.
		\item We give explicit \(C^{1,1}\setminus C^2\) examples in which the generalized Hessian set and both endpoints of the index can be computed exactly.
		\item We include a cautious smoothing consistency statement for mollifications, while avoiding unsupported claims about general Moreau-envelope convergence of the indices.
	\end{enumerate}
	
	Max-type functions are not treated as a main example in this paper. Although functions of the form \(h=\max\{g_1,g_2\}\) are important in applications, they are typically not \(C^1\), and hence not \(C^{1,1}\), at switching points unless additional compatibility conditions hold. Such functions require either a broader subdifferential framework or a separate piecewise-smooth theory. They are therefore mentioned only as motivation for future work.
	
	\section{The smooth local index}
	
	Let \(\Sd\) denote the space of real \(d\times d\) symmetric matrices and let \(\Spd\subset\Sd\) be the cone of positive semidefinite matrices. For \(Q\in\Sd\), write its spectral decomposition as
	\[
	Q = U \operatorname{diag}(\lambda_1(Q),\ldots,\lambda_d(Q))U^\top .
	\]
	Define
	\[
	\lambda_i^+(Q):=\max\{\lambda_i(Q),0\},
	\qquad
	\lambda_i^-(Q):=\max\{-\lambda_i(Q),0\}.
	\]
	The positive and negative spectral parts of \(Q\) are
	\[
	Q^+
	:=
	U\operatorname{diag}(\lambda_1^+(Q),\ldots,\lambda_d^+(Q))U^\top,
	\]
	and
	\[
	Q^-
	:=
	U\operatorname{diag}(\lambda_1^-(Q),\ldots,\lambda_d^-(Q))U^\top.
	\]
	Thus
	\[
	Q=Q^+-Q^-,
	\qquad
	Q^+\succeq 0,
	\qquad
	Q^-\succeq 0.
	\]
	
	Following \cite{davydov2019}, define the local spectral nonconvexity functional
	\[
	\ell(Q)
	:=
	\|Q^-\|_*
	=
	\sum_{i=1}^d \lambda_i^-(Q),
	\qquad Q\in\Sd,
	\]
	where \(\|\cdot\|_*\) denotes the nuclear norm. If \(h\in C^2(G)\), the smooth local lack-of-convexity index is
	\[
	\LOC_{\mathrm{DMZ}}(h;x)
	:=
	\ell(D^2h(x)).
	\]
	
	The following distance interpretation is useful throughout the paper.
	
	\begin{lemma}[Distance from the positive semidefinite cone]
		\label{lem:distance}
		For every \(Q\in\Sd\),
		\[
		\dist_*(Q,\Spd)
		:=
		\inf_{M\in\Spd}\|Q-M\|_*
		=
		\|Q^-\|_*
		=
		\ell(Q).
		\]
	\end{lemma}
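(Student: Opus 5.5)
The plan is to prove the two inequalities \(\dist_*(Q,\Spd)\le\|Q^-\|_*\) and \(\dist_*(Q,\Spd)\ge\|Q^-\|_*\) separately. The identity \(\|Q^-\|_*=\ell(Q)\) is immediate from the definitions: \(Q^-\succeq 0\) has eigenvalues \(\lambda_i^-(Q)\), and the nuclear norm of a positive semidefinite matrix equals its trace.

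\emph{Upper bound.} Take the candidate \(M=Q^+\in\Spd\). Since \(Q-Q^+=-Q^-\),
\[
\dist_*(Q,\Spd)\le\|Q-Q^+\|_*=\|Q^-\|_*.
\]
This also shows that the infimum is attained at \(Q^+\).

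\emph{Lower bound.} This is the only step requiring an idea, and I would argue by duality. The nuclear norm is dual to the operator norm, so for every \(N\in\Sd\) and every \(P\in\Sd\) with \(\|P\|_{\mathrm{op}}\le 1\) we have \(\|N\|_*\ge\tr(PN)\). Choose \(P\) to be the spectral projector onto the eigenvectors of \(Q\) with negative eigenvalues:
\[
P=U\operatorname{diag}\bigl(\mathbf 1[\lambda_1(Q)<0],\ldots,\mathbf 1[\lambda_d(Q)<0]\bigr)U^\top .
\]
Then \(0\preceq P\preceq I\), so \(\|P\|_{\mathrm{op}}\le 1\), and \(\tr(PQ)=-\sum_i\lambda_i^-(Q)\).

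Now let \(M\in\Spd\) be arbitrary. Then
\[
\|Q-M\|_*\ge\tr\bigl(P(M-Q)\bigr)=\tr(PM)+\|Q^-\|_*\ge\|Q^-\|_*,
\]
where the last step uses \(\tr(PM)\ge 0\), which holds because both \(P\) and \(M\) are positive semidefinite. Taking the infimum over \(M\in\Spd\) gives the lower bound.

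The only ingredient that needs justification is the inequality \(|\tr(PN)|\le\|P\|_{\mathrm{op}}\|N\|_*\) for symmetric matrices. It follows by diagonalizing \(N=\sum_j\mu_j v_jv_j^\top\), which gives
\[
\tr(PN)=\sum_j\mu_j\,v_j^\top Pv_j,
\qquad |v_j^\top Pv_j|\le\|P\|_{\mathrm{op}},
\]
and hence \(|\tr(PN)|\le\|P\|_{\mathrm{op}}\sum_j|\mu_j|=\|P\|_{\mathrm{op}}\|N\|_*\). With the two bounds combined, \(\dist_*(Q,\Spd)=\|Q^-\|_*=\ell(Q)\).
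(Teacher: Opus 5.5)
Your proof is correct and follows the paper's route: \(M=Q^+\) gives the upper bound, and the projector \(P_-\) onto the negative eigenspaces of \(Q\) gives the lower bound. The one difference is in the lower bound: the paper compresses \(Q-M\) by \(P_-\) and relies on the fact, stated there without proof, that compression cannot increase the nuclear norm, whereas you use the trace-duality bound \(\|N\|_*\ge\tr(P_-N)\) and prove it directly, so your version is slightly more self-contained.
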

	
	\begin{proof}
		The upper bound follows by taking \(M=Q^+\), since
		\[
		\|Q-Q^+\|_*=\|Q^-\|_*.
		\]
		For the lower bound, let \(P_-\) be the orthogonal projection onto the direct sum of the eigenspaces of \(Q\) corresponding to negative eigenvalues. For any \(M\succeq 0\), the compression \(P_-MP_-\) is positive semidefinite and
		\[
		P_-(Q-M)P_-
		=
		-Q^-|_{\operatorname{range}P_-}
		-
		P_-MP_-.
		\]
		Hence \(P_-(Q-M)P_-\) is negative semidefinite, and therefore
		\[
		\|P_-(Q-M)P_-\|_*
		=
		\tr(Q^-)+\tr(P_-MP_-)
		\geq
		\tr(Q^-)
		=
		\|Q^-\|_*.
		\]
		Since compression by an orthogonal projection cannot increase the nuclear norm,
		\[
		\|Q-M\|_*
		\geq
		\|P_-(Q-M)P_-\|_*
		\geq
		\|Q^-\|_*.
		\]
		Taking the infimum over \(M\succeq0\) gives the desired equality.
	\end{proof}
	
	When \(D^2h(x)\neq 0\), the corresponding normalized smooth index is
	\[
	\NLOC_{\mathrm{DMZ}}(h;x)
	:=
	\frac{\ell(D^2h(x))}{\|D^2h(x)\|_*}.
	\]
	If \(D^2h(x)=0\), we set \(\NLOC_{\mathrm{DMZ}}(h;x)=0\). The smooth convexity index is then
	\[
	\CONV_{\mathrm{DMZ}}(h;x)
	:=
	1-\NLOC_{\mathrm{DMZ}}(h;x).
	\]
	
	\section{Generalized Hessian sets for \(C^{1,1}\) functions}
	
	Let \(G\subset\R^d\) be open and let \(h:G\to\R\) be of class \(C^{1,1}_{\mathrm{loc}}\), meaning that \(h\) is continuously differentiable and \(\nabla h\) is locally Lipschitz. Since \(\nabla h\) is locally Lipschitz, it is differentiable almost everywhere. At points where \(\nabla h\) is differentiable, we write \(D^2h(x)\) for the corresponding symmetric derivative matrix.
	
	\begin{definition}[Generalized Hessian set]
		\label{def:hessian-set}
		Let \(h\in C^{1,1}_{\mathrm{loc}}(G)\) and \(x\in G\). Define the limiting Hessian set
		\[
		\LimHess(h;x)
		:=
		\left\{
		Q\in\Sd:
		\begin{array}{l}
			\exists x_k\to x \text{ such that } \nabla h \text{ is differentiable at }x_k,\\
			D^2h(x_k)\to Q
		\end{array}
		\right\}.
		\]
		The generalized Hessian set of \(h\) at \(x\) is
		\[
		\Hess(h;x)
		:=
		\cl\co \LimHess(h;x).
		\]
	\end{definition}
	
	This is the Clarke generalized Jacobian of the locally Lipschitz mapping \(\nabla h\), restricted to symmetric matrices:
	\[
	\Hess(h;x)=\partial_C(\nabla h)(x).
	\]
	The notation \(\Hess(h;x)\) is used to emphasize the second-order interpretation.
	
	\begin{proposition}[Basic properties]
		\label{prop:hessian-basic}
		Let \(h\in C^{1,1}_{\mathrm{loc}}(G)\) and \(x\in G\). Then:
		\begin{enumerate}[label=(\roman*)]
			\item \(\Hess(h;x)\) is nonempty, compact, and convex.
			\item If \(h\in C^2\) in a neighbourhood of \(x\), then
			\[
			\Hess(h;x)=\{D^2h(x)\}.
			\]
			\item If \(U\) is an orthogonal \(d\times d\) matrix and \(g(z):=h(Uz)\), then
			\[
			\Hess(g;z)
			=
			\{U^\top Q U: Q\in\Hess(h;Uz)\}.
			\]
		\end{enumerate}
	\end{proposition}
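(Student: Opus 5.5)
We prove the three items in turn, working directly from Definition~\ref{def:hessian-set}. The main tools are Rademacher's theorem and the local Lipschitz bound on \(\nabla h\).

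For (i), fix a compact ball \(\bar B(x,r)\subset G\) on which \(\nabla h\) is Lipschitz with constant \(L\). At every differentiability point \(y\in B(x,r)\) we then have \(\|D^2h(y)\|\le L\) in operator norm. Every element of \(\LimHess(h;x)\) is a limit of such matrices, so \(\LimHess(h;x)\) is bounded by \(L\). It is also closed, by a diagonal argument: if \(Q_j\in\LimHess(h;x)\) and \(Q_j\to Q\), pick for each \(j\) a differentiability point \(y_j\) with \(|y_j-x|<1/j\) and \(\|D^2h(y_j)-Q_j\|<1/j\). Then \(y_j\to x\) and \(D^2h(y_j)\to Q\).

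Nonemptiness works as follows. By Rademacher's theorem the differentiability points have full measure, hence are dense, so there are such points \(x_k\to x\). The matrices \(D^2h(x_k)\) are bounded and therefore have a convergent subsequence. Symmetry of the limits holds because \(D^2h(x_k)\) is symmetric at a.e. point where \(\nabla h\) is differentiable; alternatively, one may simply restrict the definition to the a.e. set of such points, as the paper does by taking \(Q\in\Sd\). Finally, the convex hull of a compact set in the finite-dimensional space \(\Sd\) is compact by Carathéodory's theorem. So the closure is not even needed, and \(\Hess(h;x)\) is nonempty, compact and convex.

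For (ii), suppose \(h\in C^2\) on a neighbourhood \(V\) of \(x\). Then \(\nabla h\) is differentiable at every point of \(V\), with derivative \(D^2h\), which is continuous on \(V\). Hence for any admissible sequence \(x_k\to x\) we get \(D^2h(x_k)\to D^2h(x)\). This gives \(\LimHess(h;x)=\{D^2h(x)\}\), where the constant sequence \(x_k=x\) shows the set is nonempty. Taking the closed convex hull of a singleton changes nothing.

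For (iii), note first that \(z\mapsto Uz\) is a linear isometry. Hence \(g\in C^{1,1}_{\mathrm{loc}}\) on \(U^\top G\), with \(\nabla g(z)=U^\top\nabla h(Uz)\). The key observation is that \(\nabla g\) is differentiable at \(z\) exactly when \(\nabla h\) is differentiable at \(Uz\): one direction is the chain rule, and the other uses the inverse map with \(U^\top\). At such points the chain rule gives
\[
D^2g(z)=U^\top D^2h(Uz)\,U .
\]
Now \(z_k\to z\) if and only if \(Uz_k\to Uz\), and the map \(Q\mapsto U^\top QU\) is a linear homeomorphism of \(\Sd\). It therefore carries \(\LimHess(h;Uz)\) bijectively onto \(\LimHess(g;z)\). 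Since this map is linear and continuous with continuous inverse, it commutes with taking convex hulls and closures, which yields the formula for \(\Hess(g;z)\).

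The only step needing real care is the closedness and boundedness argument in (i). Everything else is bookkeeping with the chain rule. We do not need the identification \(\Hess(h;x)=\partial_C(\nabla h)(x)\).
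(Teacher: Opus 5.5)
Your proof is correct and follows the same route as the paper's. A local Lipschitz bound on \(D^2h\) at differentiability points gives (i), continuity of \(D^2h\) gives (ii), and the chain rule \(D^2g(z)=U^\top D^2h(Uz)U\), carried through limits, convex hulls and closures, gives (iii); you simply supply details the paper leaves implicit, namely closedness of \(\LimHess(h;x)\), Carath\'eodory, and the two-way differentiability correspondence. One small remark: since \(h\) is differentiable on a neighbourhood, the derivative of \(\nabla h\) is symmetric at \emph{every} point where it exists, so your hedge that symmetry holds only almost everywhere is unnecessary.
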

	
	\begin{proof}
		Since \(\nabla h\) is locally Lipschitz, Rademacher's theorem implies that \(\nabla h\) is differentiable almost everywhere. On every compact \(K\Subset G\), the local Lipschitz constant of \(\nabla h\) bounds the operator norm of \(D^2h(y)\) at differentiability points \(y\in K\). Hence the set of nearby limiting Hessians is nonempty and locally bounded, and its closed convex hull is nonempty, compact, and convex.
		
		If \(h\in C^2\) near \(x\), continuity of \(D^2h\) gives \(D^2h(x_k)\to D^2h(x)\) for every sequence \(x_k\to x\), proving (ii).
		
		For (iii), at differentiability points of \(\nabla g\),
		\[
		D^2g(z)=U^\top D^2h(Uz)U.
		\]
		Passing to limits and then to closed convex hulls gives the stated transformation rule.
	\end{proof}
	
	\section{The interval-valued local nonconvexity index}
	
	We now define the generalized local index. The key point is that the nonsmooth object is naturally interval-valued.
	
	\begin{definition}[Local nonconvexity interval]
		\label{def:interval-index}
		Let \(h\in C^{1,1}_{\mathrm{loc}}(G)\) and \(x\in G\). Define
		\[
		\underline{\LOC}(h;x)
		:=
		\inf_{Q\in\Hess(h;x)} \ell(Q),
		\]
		and
		\[
		\overline{\LOC}(h;x)
		:=
		\sup_{Q\in\Hess(h;x)} \ell(Q).
		\]
		The local nonconvexity interval of \(h\) at \(x\) is
		\[
		\mathcal I_h(x)
		:=
		\ell(\Hess(h;x))
		=
		\{\ell(Q):Q\in\Hess(h;x)\}.
		\]
	\end{definition}
	
	\begin{proposition}[Interval structure]
		\label{prop:interval}
		For every \(h\in C^{1,1}_{\mathrm{loc}}(G)\) and \(x\in G\),
		\[
		\mathcal I_h(x)
		=
		\left[
		\underline{\LOC}(h;x),
		\overline{\LOC}(h;x)
		\right].
		\]
	\end{proposition}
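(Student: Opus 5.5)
The plan is to show that \(\mathcal I_h(x)=\ell(\Hess(h;x))\) is the continuous image of a compact connected set. Such an image is a compact interval, and its endpoints must be the minimum and maximum of \(\ell\), that is, \(\underline{\LOC}(h;x)\) and \(\overline{\LOC}(h;x)\). The argument needs three ingredients: continuity of \(\ell\) on \(\Sd\), compactness and convexity of \(\Hess(h;x)\), and the intermediate value theorem along segments.

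\textbf{Step 1: continuity of \(\ell\).} I would get this from Lemma~\ref{lem:distance}. There \(\ell(Q)=\dist_*(Q,\Spd)\), and a distance function to a nonempty set is \(1\)-Lipschitz with respect to the norm used:
\[
|\ell(Q)-\ell(Q')|\le \|Q-Q'\|_* .
\]
An alternative route is the continuity of the ordered eigenvalue maps together with continuity of \(t\mapsto\max\{-t,0\}\). The Lipschitz bound is cleaner, so I would use it.

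\textbf{Step 2: endpoints are attained.} By Proposition~\ref{prop:hessian-basic}(i), \(\Hess(h;x)\) is nonempty and compact. The continuous function \(\ell\) therefore attains its infimum and supremum over it. So there are \(Q_0,Q_1\in\Hess(h;x)\) with
\[
\ell(Q_0)=\underline{\LOC}(h;x),
\qquad
\ell(Q_1)=\overline{\LOC}(h;x).
\]
In particular both endpoints are finite and belong to \(\mathcal I_h(x)\). The inclusion \(\mathcal I_h(x)\subseteq[\underline{\LOC}(h;x),\overline{\LOC}(h;x)]\) is immediate from the definitions of infimum and supremum.

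\textbf{Step 3: every intermediate value is attained.} For the reverse inclusion, fix \(t\) between the two endpoints. Convexity of \(\Hess(h;x)\), again from Proposition~\ref{prop:hessian-basic}(i), puts the segment
\[
Q_s:=(1-s)Q_0+sQ_1,\qquad s\in[0,1],
\]
inside \(\Hess(h;x)\). The function \(s\mapsto\ell(Q_s)\) is continuous on \([0,1]\) and runs from \(\underline{\LOC}(h;x)\) to \(\overline{\LOC}(h;x)\). The intermediate value theorem gives some \(s\) with \(\ell(Q_s)=t\), so \(t\in\mathcal I_h(x)\).

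There is no serious obstacle. The only point needing care is continuity of \(\ell\), and Lemma~\ref{lem:distance} settles it at once. It is also worth noting that convexity of \(\Hess(h;x)\) is essential. The image of the possibly disconnected set \(\LimHess(h;x)\) need not be an interval, which is why the closed convex hull is used.
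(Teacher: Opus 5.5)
Your proof is correct and takes essentially the same route as the paper. The paper notes that \(\Hess(h;x)\) is compact and convex, hence connected, so its image under the continuous map \(\ell\) is a compact connected subset of \(\R\), i.e.\ a closed interval whose endpoints are the infimum and supremum; your segment-plus-intermediate-value argument simply makes that connectedness step explicit, and your 1-Lipschitz justification of continuity of \(\ell\) via Lemma~\ref{lem:distance} supplies a detail the paper only asserts.
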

	
	\begin{proof}
		The set \(\Hess(h;x)\) is compact and convex, hence compact and connected. The functional \(\ell:\Sd\to\R\) is continuous. Therefore \(\ell(\Hess(h;x))\) is a compact connected subset of \(\R\), hence a closed interval. Its endpoints are precisely the infimum and supremum in Definition~\ref{def:interval-index}.
	\end{proof}
	
	\begin{remark}
		The interval-valued formulation is the main conceptual difference from the smooth case. If \(h\) is \(C^2\), then \(\Hess(h;x)\) is a singleton and the interval collapses. At a genuinely nonsmooth \(C^{1,1}\) point, however, different limiting Hessians may display different degrees of negative curvature. The lower endpoint is therefore a best-case local second-order nonconvexity, while the upper endpoint is a worst-case local second-order nonconvexity.
	\end{remark}
	
	\subsection{Normalized indices}
	
	The normalized quantities require a small convention at flat points.
	
	\begin{definition}[Normalized local indices]
		\label{def:normalized}
		Let \(h\in C^{1,1}_{\mathrm{loc}}(G)\) and \(x\in G\). If
		\[
		\Hess(h;x)\setminus\{0\}\neq\varnothing,
		\]
		define
		\[
		\underline{\NLOC}(h;x)
		:=
		\inf_{Q\in\Hess(h;x)\setminus\{0\}}
		\frac{\ell(Q)}{\|Q\|_*},
		\]
		and
		\[
		\overline{\NLOC}(h;x)
		:=
		\sup_{Q\in\Hess(h;x)\setminus\{0\}}
		\frac{\ell(Q)}{\|Q\|_*}.
		\]
		If \(\Hess(h;x)=\{0\}\), set
		\[
		\underline{\NLOC}(h;x)
		=
		\overline{\NLOC}(h;x)
		=
		0.
		\]
		The associated normalized convexity endpoints are
		\[
		\underline{\CONV}(h;x)
		:=
		1-\overline{\NLOC}(h;x),
		\qquad
		\overline{\CONV}(h;x)
		:=
		1-\underline{\NLOC}(h;x).
		\]
	\end{definition}
	
	This convention avoids excluding points where \(0\in\Hess(h;x)\). The zero matrix is omitted from the ratio when nonzero generalized Hessians are present, and the fully flat case is assigned normalized nonconvexity zero.
	
	\section{Basic structural properties}
	
	\subsection{Reduction to the smooth case}
	
	\begin{proposition}[Consistency with the smooth index]
		\label{prop:smooth-reduction}
		If \(h\in C^2\) in a neighbourhood of \(x\), then
		\[
		\underline{\LOC}(h;x)
		=
		\overline{\LOC}(h;x)
		=
		\LOC_{\mathrm{DMZ}}(h;x).
		\]
		If \(D^2h(x)\neq0\), then
		\[
		\underline{\NLOC}(h;x)
		=
		\overline{\NLOC}(h;x)
		=
		\NLOC_{\mathrm{DMZ}}(h;x).
		\]
		If \(D^2h(x)=0\), the normalized indices are all zero.
	\end{proposition}
	
	\begin{proof}
		By Proposition~\ref{prop:hessian-basic},
		\[
		\Hess(h;x)=\{D^2h(x)\}.
		\]
		The infimum and supremum over a singleton are evaluation at the unique element. The normalized statement follows from Definition~\ref{def:normalized}.
	\end{proof}
	
	\subsection{Convex functions}
	
	\begin{proposition}[Convex \(C^{1,1}\) functions]
		\label{prop:convex}
		Let \(h\in C^{1,1}_{\mathrm{loc}}(G)\) be convex. Then, for every \(x\in G\),
		\[
		\Hess(h;x)\subset\Spd.
		\]
		Consequently,
		\[
		\underline{\LOC}(h;x)
		=
		\overline{\LOC}(h;x)
		=
		0,
		\]
		and
		\[
		\underline{\NLOC}(h;x)
		=
		\overline{\NLOC}(h;x)
		=
		0.
		\]
	\end{proposition}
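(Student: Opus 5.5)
The plan is to show first that every limiting Hessian is positive semidefinite, and then pass to the closed convex hull using the fact that \(\Spd\) is a closed convex cone. After that, the statements about the indices follow from Lemma~\ref{lem:distance} and Definition~\ref{def:normalized}.

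\textbf{Step 1: Hessians at differentiability points.} Let \(y\in G\) be a point where \(\nabla h\) is differentiable, and fix a direction \(v\in\R^d\). Since \(h\) is convex on the open set \(G\), the gradient is monotone:
\[
\langle \nabla h(y+tv)-\nabla h(y),\,tv\rangle\ge 0
\]
for all small \(t>0\) with \(y+tv\in G\). Dividing by \(t^2\) and letting \(t\downarrow 0\), differentiability of \(\nabla h\) at \(y\) gives
\[
\langle D^2h(y)v,v\rangle\ge 0 .
\]
Hence \(D^2h(y)\in\Spd\). If \(G\) is not convex, I apply convexity on a small ball around \(y\); this is the natural reading of a convex function on a domain. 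Monotonicity of \(\nabla h\) there follows by adding the two gradient inequalities \(h(z)\ge h(y)+\langle\nabla h(y),z-y\rangle\) and \(h(y)\ge h(z)+\langle\nabla h(z),y-z\rangle\).

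\textbf{Step 2: limits and hulls.} If \(x_k\to x\) are differentiability points with \(D^2h(x_k)\to Q\), then \(Q\in\Spd\) because \(\Spd\) is closed. Thus \(\LimHess(h;x)\subset\Spd\). Since \(\Spd\) is also convex, \(\co\LimHess(h;x)\subset\Spd\), and taking closures gives \(\Hess(h;x)\subset\Spd\).

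\textbf{Step 3: consequences for the indices.} Every \(Q\in\Spd\) has no negative eigenvalues, so \(Q^-=0\) and \(\ell(Q)=0\); equivalently, \(\dist_*(Q,\Spd)=0\) by Lemma~\ref{lem:distance}. The set \(\Hess(h;x)\) is nonempty by Proposition~\ref{prop:hessian-basic}, so the infimum and supremum of \(\ell\) over it are both \(0\). For the normalized indices there are two cases:
\begin{itemize}
\item if \(\Hess(h;x)\setminus\{0\}\) is nonempty, every ratio \(\ell(Q)/\|Q\|_*\) appearing in Definition~\ref{def:normalized} equals \(0\), so both endpoints are \(0\);
\item if \(\Hess(h;x)=\{0\}\), the convention in Definition~\ref{def:normalized} sets both endpoints to \(0\).
\end{itemize}

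The only step with real content is Step~1, namely deriving positive semidefiniteness of the a.e.\ Hessian from convexity. The monotone-gradient argument handles it without any appeal to Alexandrov-type theorems, so I expect no genuine obstacle.
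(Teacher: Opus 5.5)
Your proposal is correct and follows the paper's route: positive semidefiniteness of the Hessian at differentiability points, then closedness and convexity of \(\Spd\) to pass to \(\Hess(h;x)\), then \(\ell\equiv 0\) on \(\Spd\) together with the conventions of Definition~\ref{def:normalized}. Your monotone-gradient argument in Step~1 supplies the detail the paper only asserts. It gives positive semidefiniteness at \emph{every} differentiability point, not just almost every one, and that is exactly what the definition of \(\LimHess(h;x)\) requires.
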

	
	\begin{proof}
		At almost every point where \(\nabla h\) is differentiable, the Hessian \(D^2h\) of a convex \(C^{1,1}\) function is positive semidefinite. Limits of positive semidefinite matrices are positive semidefinite, and convex combinations of positive semidefinite matrices are positive semidefinite. Hence \(\Hess(h;x)\subset\Spd\). Therefore \(\ell(Q)=0\) for every \(Q\in\Hess(h;x)\), proving the result.
	\end{proof}
	
	\begin{remark}
		We do not claim a converse in this paper. Global or neighbourhood-wise convexity from generalized second-order positivity is a more delicate question and belongs to the broader theory of variational convexity and second-order generalized differentiation.
	\end{remark}
	
	\subsection{Orthogonal invariance}
	
	\begin{proposition}[Orthogonal invariance]
		\label{prop:orthogonal}
		Let \(U\) be an orthogonal \(d\times d\) matrix and set \(g(z):=h(Uz)\). Then
		\[
		\underline{\LOC}(g;z)
		=
		\underline{\LOC}(h;Uz),
		\qquad
		\overline{\LOC}(g;z)
		=
		\overline{\LOC}(h;Uz).
		\]
		The same equalities hold for the normalized indices.
	\end{proposition}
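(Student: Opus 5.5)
The plan is to reduce everything to the transformation rule in Proposition~\ref{prop:hessian-basic}(iii) and the invariance of the spectral functional \(\ell\) and the nuclear norm under orthogonal congruence. By Proposition~\ref{prop:hessian-basic}(iii),
\[
\Hess(g;z)=\{U^\top Q U: Q\in\Hess(h;Uz)\},
\]
so the map \(\Phi(Q):=U^\top QU\) is a bijection from \(\Hess(h;Uz)\) onto \(\Hess(g;z)\); its inverse is \(Q\mapsto UQU^\top\). Hence any infimum or supremum over \(\Hess(g;z)\) of a function \(F\) equals the corresponding infimum or supremum over \(\Hess(h;Uz)\) of \(F\circ\Phi\). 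It then suffices to check that \(\ell\circ\Phi=\ell\) and \(\|\cdot\|_*\circ\Phi=\|\cdot\|_*\).

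For the invariance, I would note that \(U^\top QU\) is orthogonally similar to \(Q\), so both matrices have the same eigenvalues \(\lambda_1(Q),\ldots,\lambda_d(Q)\). Since \(\ell(Q)=\sum_i\lambda_i^-(Q)\) and \(\|Q\|_*=\sum_i|\lambda_i(Q)|\) for symmetric \(Q\), both depend only on the spectrum and are therefore unchanged by \(\Phi\). Alternatively, one can use Lemma~\ref{lem:distance}: \(\Phi\) maps \(\Spd\) onto itself and preserves \(\|\cdot\|_*\), so it preserves \(\dist_*(\cdot,\Spd)\). This gives the equalities for \(\underline{\LOC}\) and \(\overline{\LOC}\) directly.

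For the normalized indices, I would also check that the case split in Definition~\ref{def:normalized} is respected. Since \(\Phi\) is linear and injective, \(\Phi(Q)=0\) if and only if \(Q=0\). Thus \(\Phi\) maps \(\Hess(h;Uz)\setminus\{0\}\) bijectively onto \(\Hess(g;z)\setminus\{0\}\), and \(\Hess(g;z)=\{0\}\) exactly when \(\Hess(h;Uz)=\{0\}\). In the nonflat case the ratio \(\ell(Q)/\|Q\|_*\) is invariant under \(\Phi\), so the infima and suprema agree. In the flat case both sides are zero by convention. The equalities for the \(\CONV\) endpoints follow immediately, since they are defined from the \(\NLOC\) endpoints.

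None of these steps is a real obstacle. The only point needing care is the bookkeeping for the zero matrix and the flat-point convention in the normalized case, and linearity and injectivity of \(\Phi\) settle it.
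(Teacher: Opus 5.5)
Your proposal is correct and follows the paper's proof. Both arguments transport the infima and suprema through the congruence rule of Proposition~\ref{prop:hessian-basic}(iii), and both use that \(\ell\) and \(\|\cdot\|_*\) depend only on the spectrum, which \(Q\mapsto U^\top QU\) preserves; your explicit check that this map fixes the zero matrix, and so respects the flat-point convention of Definition~\ref{def:normalized}, is bookkeeping the paper leaves implicit.
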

	
	\begin{proof}
		By Proposition~\ref{prop:hessian-basic},
		\[
		\Hess(g;z)
		=
		\{U^\top Q U:Q\in\Hess(h;Uz)\}.
		\]
		Orthogonal congruence preserves eigenvalues and nuclear norm. Hence
		\[
		\ell(U^\top Q U)=\ell(Q),
		\qquad
		\|U^\top Q U\|_*=\|Q\|_*.
		\]
		Taking infima and suprema gives the claim.
	\end{proof}
	
	\subsection{Subadditivity under sums}
	
	\begin{proposition}[Upper endpoint subadditivity]
		\label{prop:sum}
		Let \(h_1,h_2\in C^{1,1}_{\mathrm{loc}}(G)\) and set \(h=h_1+h_2\). Then, for every \(x\in G\),
		\[
		\overline{\LOC}(h;x)
		\leq
		\overline{\LOC}(h_1;x)
		+
		\overline{\LOC}(h_2;x).
		\]
	\end{proposition}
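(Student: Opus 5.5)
The plan combines two ingredients: a sum rule for the generalized Hessian sets and subadditivity of the spectral functional \(\ell\).

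\emph{Step 1 (sum rule).} First I will show
\[
\Hess(h;x)\subset \Hess(h_1;x)+\Hess(h_2;x).
\]
This is the Clarke Jacobian sum rule applied to \(\nabla h=\nabla h_1+\nabla h_2\), but I would verify it directly from Definition~\ref{def:hessian-set}. By Rademacher's theorem the set \(\Omega\) of points where both \(\nabla h_1\) and \(\nabla h_2\) are differentiable has full measure. I then invoke the standard fact that the Clarke generalized Jacobian is unchanged if the approximating sequences are restricted to avoid a prescribed null set. Granting this, every \(Q\in\LimHess(h;x)\) can be obtained as a limit of \(D^2h(x_k)=D^2h_1(x_k)+D^2h_2(x_k)\) with \(x_k\in\Omega\). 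The Hessians \(D^2h_i(x_k)\) are locally bounded, so after passing to a subsequence they converge to some \(Q_i\in\LimHess(h_i;x)\), and then \(Q=Q_1+Q_2\). The set \(\Hess(h_1;x)+\Hess(h_2;x)\) is a sum of two compact convex sets (Proposition~\ref{prop:hessian-basic}), hence compact and convex. Taking the closed convex hull therefore gives the inclusion. \textbf{This null-set independence is the main obstacle.} If I cannot quote it, I would fall back on the characterization via Rademacher and Fubini from Clarke's theory. Alternatively, I would avoid the issue by noting that \(\ell\) is convex, so that \(\sup_{\Hess}\ell=\sup_{\LimHess}\ell\), and then use only limiting Hessians taken along \(\Omega\).

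\emph{Step 2 (subadditivity of \(\ell\)).} Next I will show \(\ell(A+B)\le \ell(A)+\ell(B)\) for \(A,B\in\Sd\). By Lemma~\ref{lem:distance}, \(\ell=\dist_*(\cdot,\Spd)\). Choose \(M_A,M_B\succeq0\) attaining the distances. Then \(M_A+M_B\succeq0\), and the triangle inequality gives
\[
\ell(A+B)\le\|A+B-M_A-M_B\|_*\le \ell(A)+\ell(B).
\]
The same argument gives positive homogeneity, so \(\ell\) is convex; this convexity is what the fallback in Step 1 uses.

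\emph{Step 3 (conclusion).} Take any \(Q\in\Hess(h;x)\). By Step 1, \(Q=Q_1+Q_2\) with \(Q_i\in\Hess(h_i;x)\). By Step 2, \(\ell(Q)\le \ell(Q_1)+\ell(Q_2)\le\overline{\LOC}(h_1;x)+\overline{\LOC}(h_2;x)\). Taking the supremum over \(Q\in\Hess(h;x)\) gives the claim.
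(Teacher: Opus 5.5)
Your overall structure matches the paper's proof: the inclusion \(\Hess(h;x)\subset\Hess(h_1;x)+\Hess(h_2;x)\), then subadditivity of \(\ell\) via Lemma~\ref{lem:distance}, then a supremum. The paper simply quotes the Clarke Jacobian sum rule for the inclusion. Steps~2 and~3 are fine.

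The flaw is in your direct verification of the inclusion. The null-set theorem says that the \emph{closed convex hull} of limiting Hessians is unchanged when sequences avoid a null set. It does not say that each element of \(\LimHess(h;x)\) survives that restriction. So your sentence ``every \(Q\in\LimHess(h;x)\) can be obtained as a limit \dots\ with \(x_k\in\Omega\)'' is false in general.

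Here is a counterexample in \(d=1\). Let \(f\) be a sawtooth with slopes \(\pm1\) and \(|f(t)|\le t^2\) (teeth of height \(4^{-n-1}\) on \([2^{-n-1},2^{-n}]\), extended evenly). Put \(h(t)=\int_0^t f\), \(h_2(t)=-\frac12 t|t|\) and \(h_1=h-h_2\). Then \(0\in\LimHess(h;0)\), but it is produced only by the point \(0\) itself. That point is not in \(\Omega\), because \(h_1'\) and \(h_2'\) are not differentiable there, and \(D^2h=\pm1\) on \(\Omega\) near \(0\).

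The repair is to reorder the argument:
\begin{enumerate}[label=(\arabic*)]
\item Let \(\LimHess_\Omega(h;x)\) be the set of limits taken along \(\Omega\).
\item Your subsequence argument gives \(\LimHess_\Omega(h;x)\subset\Hess(h_1;x)+\Hess(h_2;x)\).
\item The null-set theorem, correctly stated, gives \(\Hess(h;x)=\cl\co\LimHess_\Omega(h;x)\).
\item Compactness and convexity of the Minkowski sum then conclude.
\end{enumerate}

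Your fallback does not sidestep the issue either. \(\sup_{\LimHess(h;x)}\ell\) may be attained at limiting Hessians that are invisible along \(\Omega\), as above, so restricting to \(\Omega\) again requires the null-set theorem.

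To genuinely avoid that theorem, use convexity of \(\ell\) through averaging. If \(\nabla h\) is differentiable at \(y\), the Gauss--Green formula shows that \(D^2h(y)\) is the limit, as \(r\downarrow0\), of the averages of \(D^2h\) over \(B(y,r)\). These averages see only almost-everywhere values, so Jensen's inequality and Step~2 give
\[
\ell(D^2h(y))\le\lim_{r\downarrow0}\,\sup_{z\in B(y,r)\cap\Omega}\bigl(\ell(D^2h_1(z))+\ell(D^2h_2(z))\bigr).
\]
A compactness argument as in Theorem~\ref{thm:usc} then bounds \(\ell\) on \(\LimHess(h;x)\) by \(\overline{\LOC}(h_1;x)+\overline{\LOC}(h_2;x)\). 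Convexity and continuity of \(\ell\) extend this bound to \(\Hess(h;x)=\cl\co\LimHess(h;x)\). This is the same averaging mechanism the paper uses for Proposition~\ref{prop:mollification}.
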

	
	\begin{proof}
		Since
		\[
		\nabla h=\nabla h_1+\nabla h_2,
		\]
		the standard Clarke generalized Jacobian sum rule for locally Lipschitz mappings gives
		\[
		\Hess(h;x)
		\subset
		\Hess(h_1;x)+\Hess(h_2;x),
		\]
		where the sum on the right is the Minkowski sum of subsets of \(\Sd\).
		
		Let \(Q\in\Hess(h;x)\). Then \(Q=Q_1+Q_2\) for some
		\[
		Q_i\in\Hess(h_i;x),
		\qquad i=1,2.
		\]
		Using Lemma~\ref{lem:distance},
		\[
		\ell(Q)
		=
		\dist_*(Q,\Spd).
		\]
		For arbitrary \(M_1,M_2\in\Spd\), we have \(M_1+M_2\in\Spd\), and therefore
		\[
		\ell(Q_1+Q_2)
		\leq
		\|Q_1+Q_2-(M_1+M_2)\|_*
		\leq
		\|Q_1-M_1\|_*+\|Q_2-M_2\|_*.
		\]
		Taking infima over \(M_1,M_2\in\Spd\) yields
		\[
		\ell(Q_1+Q_2)
		\leq
		\ell(Q_1)+\ell(Q_2).
		\]
		Thus
		\[
		\ell(Q)
		\leq
		\overline{\LOC}(h_1;x)+\overline{\LOC}(h_2;x).
		\]
		Taking the supremum over \(Q\in\Hess(h;x)\) proves the result.
	\end{proof}
	
	\begin{remark}
		The upper endpoint is the natural one for subadditivity. The lower endpoint need not satisfy an analogous inequality without stronger assumptions, because the minimizing generalized Hessian for \(h_1+h_2\) need not decompose into minimizing generalized Hessians for \(h_1\) and \(h_2\).
	\end{remark}
	
	\subsection{Upper semicontinuity}
	
	\begin{theorem}[Upper semicontinuity of the upper endpoint]
		\label{thm:usc}
		Let \(h\in C^{1,1}_{\mathrm{loc}}(G)\). Then the mapping
		\[
		x\mapsto \overline{\LOC}(h;x)
		\]
		is upper semicontinuous on \(G\).
	\end{theorem}
	
	\begin{proof}
		Let \(x_k\to x\). Passing to a subsequence if necessary, assume that
		\[
		\limsup_{k\to\infty}\overline{\LOC}(h;x_k)
		=
		\lim_{k\to\infty}\overline{\LOC}(h;x_k).
		\]
		For each \(k\), choose \(Q_k\in\Hess(h;x_k)\) such that
		\[
		\ell(Q_k)
		\geq
		\overline{\LOC}(h;x_k)-\frac1k.
		\]
		Choose a compact set \(K\Subset G\) containing \(x\) and all sufficiently large \(x_k\). Since \(\nabla h\) is Lipschitz on \(K\), there is \(L_K>0\) such that
		\[
		\|D^2h(y)\|_2\leq L_K
		\]
		at every differentiability point \(y\in K\). It follows that every matrix in \(\Hess(h;y)\), \(y\in K\), has spectral norm at most \(L_K\). Thus \((Q_k)\) is bounded. Passing to a further subsequence, assume \(Q_k\to Q\).
		
		The Clarke generalized Hessian mapping is outer semicontinuous, since it is the Clarke generalized Jacobian of the locally Lipschitz mapping \(\nabla h\). Therefore
		\[
		Q\in\Hess(h;x).
		\]
		Since \(\ell\) is continuous,
		\[
		\begin{aligned}
			\limsup_{k\to\infty}\overline{\LOC}(h;x_k)
			&=
			\lim_{k\to\infty}\overline{\LOC}(h;x_k)  \\
			&\leq
			\lim_{k\to\infty}\left(\ell(Q_k)+\frac1k\right) \\
			&=
			\ell(Q)
			\leq
			\overline{\LOC}(h;x).
		\end{aligned}
		\]
		This proves upper semicontinuity.
	\end{proof}
	
	\begin{remark}
		The proof uses local Lipschitz boundedness of \(\nabla h\), not outer semicontinuity alone, to obtain compactness of the maximizing sequence. This distinction is important: outer semicontinuity by itself does not automatically supply the boundedness needed in the argument.
	\end{remark}
	
	\section{Relation with weak convexity}
	
	The index \(\overline{\LOC}\) measures the trace-size of the negative spectral part of generalized Hessians. Weak convexity is instead controlled by the most negative eigenvalue. The two quantities are closely related but not identical.
	
	\begin{definition}[Pointwise weak-convexity curvature]
		For \(h\in C^{1,1}_{\mathrm{loc}}(G)\), define
		\[
		\rho(h;x)
		:=
		\sup_{Q\in\Hess(h;x)}
		\max\{0,-\lambda_{\min}(Q)\}.
		\]
	\end{definition}
	
	Thus \(\rho(h;x)\) is the smallest nonnegative number such that
	\[
	Q+\rho(h;x)I\succeq0
	\qquad
	\text{for all }Q\in\Hess(h;x).
	\]
	
	\begin{proposition}[Comparison with the upper endpoint]
		\label{prop:weak-comparison}
		For every \(h\in C^{1,1}_{\mathrm{loc}}(G)\) and \(x\in G\),
		\[
		\rho(h;x)
		\leq
		\overline{\LOC}(h;x)
		\leq
		d\,\rho(h;x).
		\]
	\end{proposition}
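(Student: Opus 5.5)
The plan is to reduce both inequalities to an elementary pointwise comparison between \(\ell(Q)\) and \(\max\{0,-\lambda_{\min}(Q)\}\) for a single symmetric matrix \(Q\). After that, we take suprema over the compact set \(\Hess(h;x)\) (Proposition~\ref{prop:hessian-basic}(i)). So fix \(Q\in\Sd\) and write \(r(Q):=\max\{0,-\lambda_{\min}(Q)\}\). By definition \(\ell(Q)=\sum_{i=1}^d\lambda_i^-(Q)\), and each \(\lambda_i^-(Q)=\max\{-\lambda_i(Q),0\}\) satisfies \(0\le\lambda_i^-(Q)\le r(Q)\). The largest of these terms equals \(r(Q)\): it is \(\lambda^-\) evaluated at \(\lambda_{\min}(Q)\), which is \(-\lambda_{\min}(Q)\) when \(\lambda_{\min}(Q)<0\) and \(0\) otherwise.

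From this we get the pointwise sandwich
\[
r(Q)\le \ell(Q)\le d\,r(Q).
\]
The left inequality holds because a sum of nonnegative terms dominates its largest term. The right inequality holds because there are \(d\) terms, each bounded by \(r(Q)\).

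Next we pass to suprema over \(Q\in\Hess(h;x)\). For any \(Q\in\Hess(h;x)\) we have \(r(Q)\le\ell(Q)\le\overline{\LOC}(h;x)\). Taking the supremum in \(Q\) gives \(\rho(h;x)\le\overline{\LOC}(h;x)\). In the other direction, \(\ell(Q)\le d\,r(Q)\le d\,\rho(h;x)\) for every such \(Q\), and taking the supremum gives \(\overline{\LOC}(h;x)\le d\,\rho(h;x)\). Both suprema are finite because \(\Hess(h;x)\) is compact and \(\ell\) and \(r\) are continuous, although finiteness is not even needed for the inequalities.

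There is no substantial obstacle here. The only point needing care is the identification \(\max_i\lambda_i^-(Q)=r(Q)\), including the case \(Q\succeq0\), where both sides vanish. One may also note that the two suprema need not be attained at the same \(Q\). This is harmless, since each inequality is derived separately, by bounding one supremum through the pointwise comparison against the other.
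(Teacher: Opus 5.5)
Your proof is correct and is essentially the paper's argument: you establish the pointwise sandwich \(\max\{0,-\lambda_{\min}(Q)\}\le\ell(Q)\le d\,\max\{0,-\lambda_{\min}(Q)\}\) for each \(Q\in\Sd\), then take suprema over \(\Hess(h;x)\). Your explicit check that the largest term \(\lambda_i^-(Q)\) equals \(\max\{0,-\lambda_{\min}(Q)\}\), including the case \(Q\succeq0\), justifies the pointwise step a little more carefully than the paper does.
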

	
	\begin{proof}
		For a fixed \(Q\in\Sd\),
		\[
		\max\{0,-\lambda_{\min}(Q)\}
		\leq
		\sum_{i=1}^d \lambda_i^-(Q)
		=
		\ell(Q).
		\]
		Taking suprema over \(Q\in\Hess(h;x)\) gives
		\[
		\rho(h;x)\leq\overline{\LOC}(h;x).
		\]
		Conversely, for every \(Q\in\Sd\),
		\[
		\ell(Q)
		=
		\sum_{i=1}^d \lambda_i^-(Q)
		\leq
		d\,\max\{0,-\lambda_{\min}(Q)\}.
		\]
		Taking suprema again gives
		\[
		\overline{\LOC}(h;x)\leq d\,\rho(h;x).
		\]
	\end{proof}
	
	\begin{proposition}[Local quadratic shift]
		\label{prop:quadratic-shift}
		Let \(U\subset G\) be open and convex. Suppose there exists \(\rho\geq0\) such that
		\[
		Q+\rho I\succeq0
		\qquad
		\text{for all }y\in U,\; Q\in\Hess(h;y).
		\]
		Then
		\[
		y\mapsto h(y)+\frac{\rho}{2}\|y\|^2
		\]
		is convex on \(U\).
	\end{proposition}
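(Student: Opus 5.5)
Set \(g(y):=h(y)+\frac{\rho}{2}\|y\|^2\) on \(U\). The plan is to show that \(\nabla g\) is a monotone operator on \(U\); since \(g\) is \(C^1\) and \(U\) is convex, this is equivalent to convexity of \(g\) on \(U\). First, \(g\in C^{1,1}_{\mathrm{loc}}(U)\) with \(\nabla g(y)=\nabla h(y)+\rho y\). At every point \(y\) where \(\nabla h\) is differentiable, \(\nabla g\) is differentiable too, and
\[
D^2g(y)=D^2h(y)+\rho I .
\]
Every such \(D^2h(y)\) lies in \(\LimHess(h;y)\subset\Hess(h;y)\) (take the constant sequence \(x_k=y\)). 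The hypothesis therefore gives \(D^2g(y)\succeq0\) at almost every \(y\in U\).

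Second, fix \(a,b\in U\) and put \(v=b-a\). We want
\[
\langle\nabla g(b)-\nabla g(a),v\rangle\ge0 .
\]
If we could use the segment \([a,b]\) directly, we would write
\[
\langle\nabla g(b)-\nabla g(a),v\rangle=\int_0^1\langle D^2g(a+tv)v,v\rangle\,dt,
\]
which requires \(\nabla g\) to be differentiable at almost every point of the segment. That can fail, because the segment is a null set in \(\R^d\). This is the main obstacle.

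To get around it, use Fubini. Choose \(\delta>0\) such that \(a+w\) and \(b+w\) lie in \(U\) for all \(\|w\|<\delta\). For almost every such \(w\), the line \(t\mapsto a+w+tv\) meets the non-differentiability set of \(\nabla h\) only in a set of \(t\) of one-dimensional measure zero. The map \(t\mapsto\nabla g(a+w+tv)\) is Lipschitz, hence absolutely continuous. Its derivative at a.e.\ \(t\) is \(D^2g(a+w+tv)v\) by the chain rule at points of full differentiability. Hence
\[
\langle\nabla g(b+w)-\nabla g(a+w),v\rangle=\int_0^1\langle D^2g(a+w+tv)v,v\rangle\,dt\ge0
\]
for almost every small \(w\). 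Take such \(w_k\to0\) and use continuity of \(\nabla g\) to obtain the monotonicity inequality at \(a,b\).

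Third, conclude convexity. Set \(\varphi(t)=g(a+tv)\). Then \(\varphi'(t)=\langle\nabla g(a+tv),v\rangle\), and \(\varphi'\) is nondecreasing by monotonicity applied to the points \(a+sv\) and \(a+tv\), which lie in \(U\) by convexity. A \(C^1\) function on \([0,1]\) with nondecreasing derivative is convex, so \(g\) is convex along every segment in \(U\), i.e.\ convex on \(U\).

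An alternative to the Fubini step is to mollify. Let \(h_\varepsilon=h*\eta_\varepsilon\). Then \(D^2h_\varepsilon\) is an average of the a.e.\ Hessians \(D^2h\), since the distributional Hessian of a \(C^{1,1}\) function equals its a.e.\ Hessian. So \(D^2h_\varepsilon+\rho I\succeq0\) on slightly shrunken convex subsets of \(U\). The functions \(h_\varepsilon+\frac{\rho}{2}\|\cdot\|^2\) are therefore convex there, and they converge locally uniformly to \(g\), which gives the same conclusion. I would present the Fubini argument and mention mollification as a check.
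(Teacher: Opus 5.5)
Your proof is correct and follows the paper's route: add \(\frac{\rho}{2}\|y\|^2\), observe that the shifted Hessian is positive semidefinite at every differentiability point, and conclude convexity. The paper gets the second step from \(\Hess(\widetilde h;y)=\Hess(h;y)+\rho I\subset\Spd\), where you use \(D^2h(y)\in\LimHess(h;y)\); the paper then simply asserts that almost-everywhere positive semidefiniteness forces convexity of a \(C^{1,1}\) function on a convex open set, whereas your Fubini argument on shifted segments (or the mollification check) actually proves that step correctly.
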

	
	\begin{proof}
		Let
		\[
		\widetilde h(y)
		:=
		h(y)+\frac{\rho}{2}\|y\|^2.
		\]
		Then
		\[
		\Hess(\widetilde h;y)
		=
		\Hess(h;y)+\rho I
		\subset
		\Spd
		\]
		for every \(y\in U\). In particular, at almost every point where \(\nabla \widetilde h\) is differentiable,
		\[
		D^2\widetilde h(y)\succeq0.
		\]
		Since \(\widetilde h\in C^{1,1}_{\mathrm{loc}}(U)\), this almost-everywhere positive semidefiniteness of the Hessian implies convexity on the convex set \(U\).
	\end{proof}
	
	\begin{remark}
		Proposition~\ref{prop:weak-comparison} clarifies what the proposed index does and does not measure. The weak-convexity modulus is governed by the largest negative eigenvalue, while \(\overline{\LOC}\) aggregates all negative eigenvalues through the nuclear norm. Thus \(\overline{\LOC}\) is a severity index for total negative curvature, not merely a shift parameter.
	\end{remark}
	
	\section{Examples}
	
	\subsection{A smooth benchmark}
	
	Let
	\[
	h(x,y)=-\cos x-\cos y
	\]
	on \(G=(-a,a)^2\). Then
	\[
	D^2h(x,y)
	=
	\begin{pmatrix}
		\cos x & 0\\
		0 & \cos y
	\end{pmatrix}.
	\]
	Since \(h\in C^2(G)\),
	\[
	\Hess(h;(x,y))=\{D^2h(x,y)\}.
	\]
	Therefore
	\[
	\underline{\LOC}(h;(x,y))
	=
	\overline{\LOC}(h;(x,y))
	=
	(-\cos x)^+ + (-\cos y)^+.
	\]
	This example illustrates only the smooth reduction property.
	
	\subsection{A one-dimensional \(C^{1,1}\setminus C^2\) model}
	
	Let \(a,b\in\R\) and define
	\[
	h_{a,b}(t)
	:=
	\begin{cases}
		\dfrac{a}{2}t^2, & t\geq0,\\[4pt]
		\dfrac{b}{2}t^2, & t<0.
	\end{cases}
	\]
	Then \(h_{a,b}\in C^{1,1}_{\mathrm{loc}}(\R)\), because
	\[
	h_{a,b}'(t)
	=
	\begin{cases}
		at, & t>0,\\
		bt, & t<0,
	\end{cases}
	\qquad
	h_{a,b}'(0)=0,
	\]
	and \(h_{a,b}'\) is locally Lipschitz. Unless \(a=b\), the function is not \(C^2\) at \(0\).
	
	For \(t>0\),
	\[
	h_{a,b}''(t)=a,
	\]
	whereas for \(t<0\),
	\[
	h_{a,b}''(t)=b.
	\]
	Hence
	\[
	\Hess(h_{a,b};0)
	=
	[\min\{a,b\},\max\{a,b\}].
	\]
	Since \(d=1\), the functional is
	\[
	\ell(q)=(-q)^+.
	\]
	Therefore
	\[
	\underline{\LOC}(h_{a,b};0)
	=
	\min_{q\in[\min\{a,b\},\max\{a,b\}]}(-q)^+,
	\]
	and
	\[
	\overline{\LOC}(h_{a,b};0)
	=
	\max_{q\in[\min\{a,b\},\max\{a,b\}]}(-q)^+.
	\]
	
	There are three cases:
	\[
	\begin{array}{lll}
		\text{if } a,b\geq0, & \mathcal I_{h_{a,b}}(0)=\{0\}, & \text{convex case},\\[3pt]
		\text{if } a,b\leq0, &
		\mathcal I_{h_{a,b}}(0)
		=
		[\min\{-a,-b\},\max\{-a,-b\}], &
		\text{fully nonconvex case},\\[3pt]
		\text{if } \min\{a,b\}<0<\max\{a,b\}, &
		\mathcal I_{h_{a,b}}(0)
		=
		[0,\max\{-a,-b\}], &
		\text{mixed case}.
	\end{array}
	\]
	
	\begin{example}[Fully nonconvex kink]
		Let
		\[
		h(t)
		=
		-t^2+\frac12 t|t|.
		\]
		Then
		\[
		h(t)
		=
		\begin{cases}
			-\dfrac12 t^2, & t\geq0,\\[4pt]
			-\dfrac32 t^2, & t<0.
		\end{cases}
		\]
		Thus \(a=-1\) and \(b=-3\), so
		\[
		\Hess(h;0)=[-3,-1].
		\]
		Hence
		\[
		\mathcal I_h(0)=[1,3],
		\]
		that is,
		\[
		\underline{\LOC}(h;0)=1,
		\qquad
		\overline{\LOC}(h;0)=3.
		\]
		For every nonzero \(q\in[-3,-1]\),
		\[
		\frac{\ell(q)}{|q|}=1.
		\]
		Therefore
		\[
		\underline{\NLOC}(h;0)
		=
		\overline{\NLOC}(h;0)
		=
		1.
		\]
	\end{example}
	
	\begin{example}[Mixed generalized curvature]
		Let
		\[
		h(t)=\frac12 t|t|.
		\]
		Then
		\[
		h(t)
		=
		\begin{cases}
			\dfrac12 t^2, & t\geq0,\\[4pt]
			-\dfrac12 t^2, & t<0.
		\end{cases}
		\]
		Thus
		\[
		\Hess(h;0)=[-1,1].
		\]
		Consequently,
		\[
		\mathcal I_h(0)=[0,1].
		\]
		The lower endpoint is zero because the generalized Hessian set contains positive semidefinite matrices; the upper endpoint is one because the same generalized Hessian set also contains negative curvature. This example illustrates why the interval-valued formulation is more informative than a single scalar.
	\end{example}
	
	\begin{table}[h]
		\centering
		\begin{tabular}{lllll}
			\toprule
			Function & Point & \(\Hess(h;x)\) & \(\underline{\LOC}\) & \(\overline{\LOC}\) \\
			\midrule
			\(-\cos x-\cos y\) & \((x,y)\) &
			\(\{\operatorname{diag}(\cos x,\cos y)\}\) &
			\((-\cos x)^++(-\cos y)^+\) &
			same \\
			\(-t^2+\frac12t|t|\) & \(0\) &
			\([-3,-1]\) &
			\(1\) &
			\(3\) \\
			\(\frac12t|t|\) & \(0\) &
			\([-1,1]\) &
			\(0\) &
			\(1\) \\
			\bottomrule
		\end{tabular}
		\caption{Basic examples of the local nonconvexity interval.}
	\end{table}
	
	\section{A cautious smoothing statement}
	
	Smoothing is useful for computation, but a general convergence theory for these indices requires care. The Moreau envelope, for example, plays a central role in weakly convex optimization \cite{davis2019,perezaros2021,denggao2025}, but one should not claim without additional hypotheses that the Hessians of Moreau envelopes converge exactly to the generalized Hessian set used here.
	
	For standard mollification, however, one obtains a useful one-sided consistency statement.
	
	Let \(\varphi\in C_c^\infty(\R^d)\) be a nonnegative mollifier with \(\int_{\R^d}\varphi=1\), and set
	\[
	\varphi_\varepsilon(z)
	:=
	\varepsilon^{-d}\varphi(z/\varepsilon).
	\]
	For \(h\in C^{1,1}_{\mathrm{loc}}(G)\), define
	\[
	h_\varepsilon
	:=
	h*\varphi_\varepsilon
	\]
	on the subset of \(G\) where the convolution is well defined.
	
	\begin{proposition}[Mollification consistency]
		\label{prop:mollification}
		Let \(h\in C^{1,1}_{\mathrm{loc}}(G)\), let \(x_\varepsilon\to x\in G\), and let \(\varepsilon\downarrow0\). Suppose that
		\[
		D^2h_\varepsilon(x_\varepsilon)\to Q
		\]
		along a sequence. Then
		\[
		Q\in\Hess(h;x).
		\]
		Consequently,
		\[
		\ell(Q)\in \mathcal I_h(x).
		\]
	\end{proposition}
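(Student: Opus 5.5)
The plan is to express the mollified Hessian as an average of classical Hessians of \(h\) over a shrinking ball, and then let the ball collapse onto \(x\).

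First, the averaging formula. Fix a compact ball \(\bar B(x,r)\subset G\). For \(\varepsilon\) small and \(x_\varepsilon\) close to \(x\), the set \(x_\varepsilon-\operatorname{supp}\varphi_\varepsilon\) lies in \(B(x,r)\), so the convolution is well defined near \(x_\varepsilon\). Since \(\nabla h\) is Lipschitz on \(\bar B(x,r)\), it is absolutely continuous on lines. Its distributional derivative therefore coincides with the a.e.\ defined \(D^2h\), which lies in \(L^\infty\) with \(\|D^2h\|_2\le L\). Differentiating under the integral, and moving one derivative onto \(\nabla h\) (valid for Lipschitz functions), gives
\[
D^2h_\varepsilon(x_\varepsilon)=\int D^2h(x_\varepsilon-z)\,\varphi_\varepsilon(z)\,dz .
\]
The integrand ranges only over differentiability points of \(\nabla h\), a set of full measure, inside the ball \(B(x_\varepsilon,\varepsilon R)\), where \(\operatorname{supp}\varphi\subset B(0,R)\).

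Second, the key localization. Given \(\delta>0\), set \(C_\delta:=\cl\co\{D^2h(y): y\in B(x,\delta),\ \nabla h\text{ differentiable at }y\}\). This set is compact and convex by the uniform bound \(L\). For \(\varepsilon\) small, \(B(x_\varepsilon,\varepsilon R)\subset B(x,\delta)\). Because \(\varphi_\varepsilon\,dz\) is a probability measure, the integral is a limit of convex combinations of points of \(C_\delta\); equivalently, apply a separating-hyperplane argument, since every linear functional of the integral is bounded by its supremum over \(C_\delta\). Hence \(D^2h_\varepsilon(x_\varepsilon)\in C_\delta\), and so \(Q\in C_\delta\) for every \(\delta>0\).

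Third, and this is where I expect the main obstacle, I will show \(\bigcap_{\delta>0}C_\delta\subset\Hess(h;x)\). Let \(\mathcal L_\delta\) be the closure of the set of Hessians at differentiability points in \(B(x,\delta)\). A compactness argument shows that for every \(\eta>0\) there is \(\delta\) with \(\mathcal L_\delta\subset\LimHess(h;x)+\eta \bar B\). Otherwise there would be matrices \(D^2h(y_k)\) with \(y_k\to x\) staying at distance at least \(\eta\) from \(\LimHess(h;x)\), and a bounded subsequence would converge to an element of \(\LimHess(h;x)\), a contradiction. Then
\[
C_\delta=\cl\co\mathcal L_\delta\subset\co\LimHess(h;x)+\eta\bar B\subset\Hess(h;x)+\eta\bar B,
\]
using that \(\LimHess(h;x)\) is compact, so its convex hull is already closed, and that \(\co(A+\eta\bar B)=\co A+\eta\bar B\). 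Letting \(\eta\to0\) and using that \(\Hess(h;x)\) is closed gives \(Q\in\Hess(h;x)\). This is the standard outer semicontinuity of the Clarke Jacobian, and I would cite it as used in Theorem~\ref{thm:usc}. The only care needed is the closure and convex-hull bookkeeping, together with the fact that averages, unlike pointwise limits, are what appear.

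Finally, \(\ell(Q)\in\ell(\Hess(h;x))=\mathcal I_h(x)\) follows from the definition of \(\mathcal I_h(x)\) together with Proposition~\ref{prop:interval}.
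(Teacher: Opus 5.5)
Your proposal is correct and follows the same route as the paper. Like the paper, you write \(D^2h_\varepsilon(x_\varepsilon)\) as a \(\varphi_\varepsilon\)-average of a.e.\ Hessians over a ball of radius \(O(\varepsilon)\) around \(x_\varepsilon\), place it in the closed convex hull of Hessians near \(x\), and pass to the limit; your compactness argument for \(\bigcap_{\delta>0}C_\delta\subset\Hess(h;x)\) spells out the cluster-point step that the paper asserts in a single line.
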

	
	\begin{proof}
		Since \(\nabla h\) is locally Lipschitz, its weak derivative \(D^2h\) exists almost everywhere and is locally essentially bounded. For each sufficiently small \(\varepsilon\),
		\[
		D^2h_\varepsilon(x_\varepsilon)
		=
		\int_{\R^d} D^2h(x_\varepsilon-z)\varphi_\varepsilon(z)\,dz,
		\]
		where the integral is understood using the a.e. Hessian. Hence \(D^2h_\varepsilon(x_\varepsilon)\) belongs to the closed convex hull of the essential values of \(D^2h\) in a ball of radius \(O(\varepsilon)\) around \(x_\varepsilon\). Since \(x_\varepsilon\to x\), every cluster point of these convex averages lies in the closed convex hull of limiting Hessians at \(x\), namely \(\Hess(h;x)\). Thus \(Q\in\Hess(h;x)\). The final statement follows from the definition of \(\mathcal I_h(x)\).
	\end{proof}
	
	\begin{remark}
		Proposition~\ref{prop:mollification} is intentionally one-sided. It says that Hessian limits produced by mollification are admissible generalized Hessians. It does not say that every element of \(\Hess(h;x)\) can be recovered from a particular smoothing scheme, nor that the endpoints of the interval converge automatically under smoothing.
	\end{remark}
	
	\section{On possible global indices}
	
	The original smooth framework also considers integrated global indices over a domain. In the \(C^{1,1}\) setting, global constructions are possible but require more bookkeeping than the local theory. The upper endpoint \(x\mapsto\overline{\LOC}(h;x)\) is upper semicontinuous by Theorem~\ref{thm:usc}, hence Borel measurable and locally bounded. Therefore it can be integrated over compact subsets of \(G\).
	
	The lower endpoint and the full interval-valued map require more care. Under only outer semicontinuity of \(x\mapsto\Hess(h;x)\), lower-envelope behaviour need not be as regular. One can impose additional continuity assumptions on the set-valued Hessian map, or work with measurable selections and Aumann-type integrals of the interval-valued map. These issues are not pursued here because they would shift the paper away from its local purpose.
	
	\section{Conclusion}
	
	We have extended the local nonconvexity index of Davydov, Moldavskaya, and Zitikis from \(C^2\) functions to \(C^{1,1}\) functions by replacing the classical Hessian with the Clarke-type generalized Hessian set. The resulting object is naturally interval-valued:
	\[
	\mathcal I_h(x)
	=
	\left[
	\underline{\LOC}(h;x),
	\overline{\LOC}(h;x)
	\right].
	\]
	The lower endpoint records the least visible local second-order nonconvexity and the upper endpoint records the greatest.
	
	The construction is consistent with the smooth theory, vanishes for convex \(C^{1,1}\) functions, respects orthogonal changes of variables, satisfies upper-endpoint subadditivity under sums, and has an upper semicontinuous upper endpoint. The upper endpoint also controls, up to dimension-dependent constants, a pointwise weak-convexity curvature modulus. Explicit one-dimensional examples show that the interval formulation is not cosmetic: at a nonsmooth \(C^{1,1}\) point, the generalized Hessian set may contain both convex and nonconvex limiting curvatures.
	
	The paper deliberately avoids broader claims. It does not characterize variational convexity, does not handle arbitrary max-type nonsmooth functions, and does not provide a convergence theory for generalized Newton or proximal algorithms. Its contribution is a local scalar and interval-valued diagnostic that may complement, but not replace, the richer second-order machinery of modern variational analysis.
	
	Future work could develop integrated global versions of the interval index, investigate stability under perturbations of \(h\), and study whether the upper endpoint correlates with algorithmic difficulty in weakly convex or prox-regular optimization problems.

\end{document}